\newtheorem{Lemma}             {Lemma}
\newtheorem{Proposition}[Lemma]{Proposition}
\newtheorem{Theorem}    [Lemma]{Theorem}
\newcommand{\Mod}[1]{\ (\mathrm{mod}\ #1)}
\newcommand{\op}{\operatorname}
\newcommand{\sym}{\mathcal S}
\newcommand{\alt}{\mathcal A}
\title{The quadratic type of the 2-principal indecomposable modules of the double covers of alternating groups}
\author{John Murray}
\address{Department of Mathematics \& Statistics, National University of Ireland Maynooth, Co Kildare, Ireland}
\email{John.Murray@mu.ie}
\date{March 9, 2018}
\begin{document}
\maketitle

\section{Introduction}

Our topic is the representation theory of the double cover ${2.\alt_n}$ of the alternating group $\alt_n$ over an algebaically closed field $k$ of characteristic $2$, for $n\geq5$. Recall that an element of a group is said to be $2$-regular if it has odd order, real if it is conjugate to its inverse, and strongly real if some involution conjugates it to its inverse. The following is a recent result of R. Gow and the author \cite{GowM}:

\begin{Proposition}\label{P:GM}
Let $k$ be an algebraically closed field of characteristic $2$ and let $G$ be a finite group. Then the number of isomorphism classes of principal indecomposable $kG$-modules which have a quadratic geometry is equal to the number of strongly real $2$-regular conjugacy classes of $G$.
\end{Proposition}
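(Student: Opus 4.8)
The plan is to pass from quadratic geometries to symplectic (alternating) geometries, which are easier to detect, and then to count the latter by a characteristic-$2$ Frobenius--Schur argument. First I would reduce the quadratic question to a purely bilinear one. A module carrying a nondegenerate $G$-invariant quadratic form $Q$ is self-dual, and its polarization $B(v,w)=Q(v+w)-Q(v)-Q(w)$ is a nondegenerate $G$-invariant \emph{alternating} form. Conversely, suppose a principal indecomposable module $P$ carries a nondegenerate $G$-invariant alternating form $B$. Choosing any (not necessarily invariant) quadratic refinement $Q_0$ of $B$ over the perfect field $k$, the assignment $g\mapsto\bigl(v\mapsto Q_0(g^{-1}v)-Q_0(v)\bigr)$ is a $1$-cocycle on $G$ with values in the Frobenius-twisted dual $(P^{(2)})^{*}$, and $B$ extends to an invariant quadratic form exactly when this cocycle is a coboundary. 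Since $P$ is projective and $kG$ is self-injective, $(P^{(2)})^{*}$ is injective, so $H^{1}\bigl(G,(P^{(2)})^{*}\bigr)=0$ and the obstruction vanishes automatically. Hence for principal indecomposable modules ``has a quadratic geometry'' is equivalent to ``carries a nondegenerate $G$-invariant alternating form.'' This is the step I expect to be cleanest, and it isolates the genuine content in the alternating case.

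Next I would organize the bookkeeping. A principal indecomposable module $P(S)$ is self-dual iff its head $S$ is self-dual, so only self-dual simple modules contribute, and by the standard Brauer-type argument --- duality acts on the simple modules as inversion acts on the $2$-regular classes, and these two permutations are interchanged by the invertible Brauer character pairing --- the number of self-dual simple $kG$-modules equals the number of real $2$-regular classes. For each self-dual $S$ I would study the local algebra $E=\op{End}_{kG}(P(S))$ equipped with the involution $\sigma$ obtained by transposing endomorphisms through a fixed isomorphism $P(S)\cong P(S)^{*}$. Invariant bilinear forms on $P(S)$ correspond to elements of $E$, nondegenerate ones to units, symmetric ones to $\sigma$-fixed elements, and alternating ones to a distinguished $k$-subspace $\op{Alt}(E)\subseteq E$. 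Because $E$ is local with $E/J(E)=k$, the module $P(S)$ supports a nondegenerate invariant alternating form precisely when $\op{Alt}(E)$ is not contained in the radical $J(E)$, that is, when the image of $\op{Alt}(E)$ in $k$ is nonzero. Thus the whole problem is reduced to deciding, for each real $2$-regular class, whether the associated involution $\sigma$ on $E$ is of ``alternating type'' modulo the radical.

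Finally I would match this local invariant to strong reality by a characteristic-$2$ Frobenius--Schur computation. The device is the conjugation module $kG$ together with the squaring map $g\mapsto g^{2}$, whose associated operator on the cocenter $kG/[kG,kG]$ is additive and $2$-semilinear; its interaction with the inversion involution and with the span of the involutions of $G$ is what should read off, class by class, the alternating-versus-orthogonal dichotomy found in the previous step. Here strong reality enters through the identity that expresses a strongly real element as a product $t\cdot(tg)$ of two involutions, so that the relevant summation over involutions inverting $g$ is nonzero exactly on the strongly real classes; transporting this through the decomposition of $kG$ into projective covers should identify the self-dual principal indecomposable modules of alternating type with the strongly real $2$-regular classes.

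I expect this last step to be the main obstacle. Unlike the classical indicator $\frac{1}{|G|}\sum_{g}\chi(g^{2})$, in characteristic $2$ one cannot divide by $|G|$, so the argument must be carried out integrally (or $2$-adically) and the equality of the two counts extracted from a congruence rather than an exact character identity. Controlling this congruence, and showing that it simultaneously computes the number of alternating-type principal indecomposable modules and the number of strongly real $2$-regular classes, is where the real work lies.
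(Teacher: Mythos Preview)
The paper does not prove this proposition: it is quoted, without argument, as a result of Gow and the author from the preprint \cite{GowM}. So there is no proof in the present paper to compare your proposal against. What the paper does import from the same source is the finer Proposition~\ref{P:strong+weak}, which characterizes quadratic type of a \emph{single} self-dual principal indecomposable module by several equivalent conditions, for instance~(v): the Brauer character of the socle takes a value outside $2R$ on some strongly real $2$-regular element. The counting statement of Proposition~\ref{P:GM} is presumably derived in \cite{GowM} from such a pointwise criterion together with orthogonality relations for Brauer characters, rather than by a global Frobenius--Schur argument of the kind you sketch.

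As for your outline itself: the first reduction (quadratic $\Leftrightarrow$ symplectic for principal indecomposables) is the Gow--Willems theorem the paper already invokes, and your $H^{1}$-vanishing argument is a clean route to it; just make explicit that the coefficient module, the Frobenius-twisted dual of $P$, is injective because Frobenius twist is an autoequivalence of $kG$-mod over a perfect field and $kG$ is self-injective. The bookkeeping in your second step is standard and correct. The genuine gap is your third step, which you yourself flag: you describe the shape of a characteristic-$2$ Frobenius--Schur argument (squaring map on the cocenter, involutions inverting $g$, congruences in place of identities) but do not carry it out. Nothing you have written yet establishes that your ``alternating type'' invariant on $\op{End}_{kG}(P(S))$ matches, even in aggregate, the strongly-real condition on $2$-regular classes. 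So the proposal correctly identifies the ingredients and the location of the difficulty, but as written it is a programme rather than a proof.
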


Now all real elements of $\alt_n$ are strongly real. So every self-dual principal indecomposable $k\alt_n$-module has quadratic type. On the other hand, there may be real elements of $2.\alt_n$ which are not strongly real. The purpose of this note is to determine the type of each principal indecomposable $k(2.\alt_n)$-module.

Let $\sym_n$ be the symmetric group of degree $n$ and let ${\mathcal D}(n)$ be the set of partitions of $n$ which have distinct parts. In \cite[11.5]{James} G. James constructed an irreducible $k\sym_n$-module $D^\mu$ for each partition $\mu\in{\mathcal D}(n)$. Moreover, he showed that the $D^\mu$ are pairwise non-isomorphic, and every irreducible $k\sym_n$-modules is isomorphic to some $D^\mu$. As $\alt_n$ has index $2$ in $\sym_n$, Clifford theory shows that the restriction $D^\mu{\downarrow_{\alt_n}}$ is either irreducible or splits into a direct sum of two non-isomorphic irreducible $k\alt_n$-modules. Moreover, every irreducible $k\alt_n$-module is a direct summand of some $D^\mu{\downarrow_{\alt_n}}$.

D. Benson determined \cite{Benson} which $D^\mu{\downarrow_{\alt_n}}$ are reducible and we recently determined \cite{Murray17} when the irreducible direct summands of $D^\mu{\downarrow_{\alt_n}}$ are self-dual (see below for details). We will use the notation $D_A^\mu$ to denote an irreducible direct summand of $D^\mu{\downarrow_{\alt_n}}$.

As the centre of $2.\alt_n$ acts trivially on any irreducible module, the groups $\alt_n$ and $2.\alt_n$ have the same irreducible modules over $k$. We will use $P^\mu$ to denote the projective cover of $D^\mu_A$ as $k(2.\alt_n)$-module. But note that $P^\mu$ is not an $k\alt_n$-module.

Let $|\mu|_a=\sum(-1)^{j+1}\mu_j$ be the alternating sum of a partition $\mu$ and let $\ell_o(\mu)$ be the number of odd parts in $\mu$. Then $|\mu|_a\equiv \ell_o(\mu)\equiv n\Mod2$ and $|\mu|_a\leq \ell_o(\mu)$, if $\mu$ has distinct parts. Our result is:

\begin{Theorem}\label{T:main}
Let $\mu$ be a partition of\/ $n$ which has distinct parts and let $P^\mu$ be the principal indecomposable $k(2.\alt_n)$-module corresponding to the simple $k(2.\alt_n)$-module $D_A^\mu$. Then $P^\mu$ has quadratic type if and only if there is an integer $m$ such that $\frac{n-|\mu|_a}{2}\leq4m\leq\frac{n-\ell_o(\mu)}{2}$.
\end{Theorem}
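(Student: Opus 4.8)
The plan is to determine the type of $P^\mu$ through the genuine (spin) characters of $2.\alt_n$ into which a characteristic-zero lift of $P^\mu$ decomposes, using Proposition~\ref{P:GM} as the global bookkeeping device for the count. First I would separate two questions: whether $P^\mu$ is self-dual, equivalently whether its head $D_A^\mu$ is self-dual, and, among the self-dual ones, whether the invariant bilinear form admits a compatible quadratic form. Self-duality of $D_A^\mu$ is decided by the criterion of \cite{Murray17}; since quadratic type forces self-duality, I would begin by checking that the interval condition implies that criterion, so that every non-self-dual $P^\mu$ is correctly excluded (for instance when $D^\mu{\downarrow_{\alt_n}}$ splits into a dual pair). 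It is convenient to rewrite the two bounds as $\frac{n-|\mu|_a}{2}=\mu_2+\mu_4+\cdots$ and $\frac{n-\ell_o(\mu)}{2}=\sum_j\lfloor\mu_j/2\rfloor$; because the parts are distinct, pairing $\lfloor\mu_{2i-1}/2\rfloor\ge\lceil\mu_{2i}/2\rceil$ gives $\frac{n-|\mu|_a}{2}\le\frac{n-\ell_o(\mu)}{2}$, so the interval is non-empty and the condition is well posed.

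The source of the divisibility by $4$ is the rule for lifting involutions to $2.\alt_n$. An involution of $\alt_n$ is a product of $s$ disjoint transpositions with $s$ even; lifting each transposition and using the relation $\tilde\tau_i\tilde\tau_j=z\,\tilde\tau_j\tilde\tau_i$ for disjoint $\tau_i,\tau_j$, a short computation gives $t^2=z^{\binom{s}{2}}$ for the lift $t$, so $t$ is an involution precisely when $s\equiv0\Mod4$ and has order $4$ (with $t^2=z$) when $s\equiv2\Mod4$; the two double covers of $\sym_n$ restrict to the same $2.\alt_n$ and agree on this point for even $s$. I would exploit this through the Frobenius--Schur indicator $\nu_2(\chi)=|G|^{-1}\sum_g\chi(g^2)$ of the self-dual spin constituents $\chi$ of the lift of $P^\mu$: on the centre the indicator already weighs $\#\{g:g^2=1\}$ against $\#\{g:g^2=z\}$, and since $z$ acts as $-1$ on $\chi$, the two lifts of each $\alt_n$-involution enter with opposite signs according to the residue of $s$ modulo $4$. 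Relating $\nu_2(\chi)$ to the quadratic-versus-symplectic type of $P^\mu$ via the reduction of invariant forms modulo $2$ in the Gow--Murray framework, the type of $P^\mu$ becomes a signed count controlled by how the relevant involutions distribute modulo $4$.

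What remains, and what I expect to be the main obstacle, is to evaluate this indicator in terms of $\mu$ and to recognise it as the interval condition. Using the explicit values of Schur's spin characters on squares (via the shifted-tableaux / $Q$-function description), I would show that $\nu_2(\chi)$ for the spin data attached to $\mu$ reduces to a signed count of a combinatorial statistic running over all integers in $[\frac{n-|\mu|_a}{2},\frac{n-\ell_o(\mu)}{2}]$, the sign alternating with the residue modulo $4$ dictated by the involution arithmetic above, so that the indicator is positive---equivalently $P^\mu$ is of quadratic type---exactly when a multiple of $4$ lies in this range. As an independent check on the totals I would run the same arithmetic directly on the class side: for a $2$-regular class of cycle type $(\ldots,\ell^{a_\ell},\ldots)$ with $k$ cycles, reversing cycles and interchanging equal cycles produces inverting involutions whose transposition numbers fill out all of $[\frac{n-k}{2},\frac{n-k}{2}+\sum_\ell\lfloor a_\ell/2\rfloor]$ (each interchange of two equal cycles raising the count by exactly one), so the class is strongly real iff a multiple of $4$ lies in that interval, and Proposition~\ref{P:GM} forces the number of such classes to equal the number of $\mu$ meeting the stated condition. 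The genuinely hard step is the passage from the indicator sum to the two endpoints $\frac{n-|\mu|_a}{2}$ and $\frac{n-\ell_o(\mu)}{2}$, i.e.\ the precise reconciliation of the shifted/bar combinatorics of the spin characters with the distinct-part statistics $|\mu|_a$ and $\ell_o(\mu)$, and it is here that the indicator computation and the self-duality analysis of \cite{Murray17} must be brought together.
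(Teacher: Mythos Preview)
Your proposal has a genuine gap at its center. The strategy you outline---computing Frobenius--Schur indicators $\nu_2(\chi)$ of the spin constituents $\chi$ of the lift $\hat P^\mu$ and deducing the quadratic type of $P^\mu$ from them---is not one of the equivalences in Proposition~\ref{P:strong+weak}, and you do not explain how to bridge the two. The criteria there concern either the symplectic form on $\op{soc}(P^\mu)$ directly (ii), the full lattice $\hat P^\mu$ (iv), or the projective character $\Phi$ and Brauer character $\varphi$ evaluated on weakly/strongly real classes (v)--(vi); none of them reduces to indicators of the individual ordinary constituents. You yourself flag the passage from the indicator sum to the two endpoints as ``the genuinely hard step'' and leave it open. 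Even granting a link of the kind you envisage, carrying it out would require the decomposition of $\hat P^\mu$ into spin irreducibles, i.e.\ the $2$-decomposition matrix of $2.\alt_n$, which is not available in general.

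The paper proceeds quite differently and avoids characteristic-zero character theory altogether. For the ``only if'' direction it invokes criterion~(ii): if $P^\mu$ is quadratic then some involution $\hat\pi\in2.\alt_n$ satisfies $B(\hat\pi x,x)\ne0$ on $D_A^\mu$, and hence $\langle\pi e_t,e_t\rangle$ is odd for some $\mu$-tableau $t$, where $\pi$ is the image of $\hat\pi$ in $\alt_n$. Two combinatorial lemmas on polytabloids then bound the number of transpositions in $\pi$: an easy argument gives the upper bound $\frac{n-\ell_o(\mu)}{2}$, and a more delicate one---building a graph on the columns of $t$ meeting rows $2j-1,2j$ and pairing off $\pi$-fixed tabloids by swapping entries along a well-chosen component---gives the lower bound $\frac{n-|\mu|_a}{2}$. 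Since $\hat\pi$ is an involution only when $\pi$ is a $4m$-involution (the point you correctly identify), this yields the interval condition. For the ``if'' direction the paper does \emph{not} construct forms or compute indicators; it counts. Your class-side sketch is essentially the paper's Lemma~\ref{L:strongly_real}, but the step you are missing is a bijection $\phi:{\mathcal O}(n)\to{\mathcal D}(n)$ with $\ell(\lambda)=|\phi(\lambda)|_a$ and $m_o(\lambda)=\ell_o(\phi(\lambda))$, imported from \cite{CGJL} or \cite{Murray18}. This bijection transports the strongly-real count on odd partitions to the interval count on distinct-part partitions, and Proposition~\ref{P:GM} then forces every $P^\mu$ satisfying the interval condition to be quadratic. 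Without that bijection your counting check does not close, and without the polytabloid bounds you have no argument for the ``only if'' direction.
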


Throughout the paper all our modules are left modules.

\section{Notation and known results}

\subsection{Principal indecomposable modules}

Let $G$ be a finite group and let $k$ be an algebraically closed field of characteristic $p>0$. The group algebra $kG$ is a $k$-algebra together with a group homomorphism $G\rightarrow\op{U}(kG)$ whose image is a basis for $kG$. The ring multiplication makes $kG$ into a left module over itself, the so-called regular $kG$-module. The indecomposable summands of the regular module are called the principal indecomposable $kG$-modules.

Now $\op{End}_{kG}(kG)\cong kG^{op}$ as $k$-algebras; $x\in kG$ corresponds to the map $r_x:kG\rightarrow kG$, $r_x(y)=yx$, for all $y\in kG$. It follows from this that if $P$ is a direct summand of $kG$, then there is a primitive idempotent $e\in kG$ such that $P=kGe$. Conversely, given a primitive idempotent $e\in kG$, we have $kG=kGe\oplus kG(1-e)$, and $kGe$ is a principal indecomposable $kG$-module.

Let $P$ be a principal indecomposable $kG$-module. Then $P$ has a unique maximal submodule $\op{rad}(P)$ and a unique irreducible submodule $\op{soc}(P)$. It is known that $P/\op{rad}(P)\cong\op{soc}(P)$. Moreover this irreducible module determines the isomorphism type of $P$. Conversely, given an irreducible $kG$-module $S$, there is a surjection $\phi:kG\rightarrow S$. Then $\phi$ splits as $kG$ is projective. So $kG=\op{ker}(\phi)\oplus P$, for some projective $kG$-module $P$. In particular $P/\op{rad}(P)\cong S\cong\op{soc}(P)$. In this way there is a canonical one-to-one correspondence between the isomorphism classes of principal indecomposable $kG$-modules and the isomorphism classes of irreducible $kG$-modules.

Recall that a conjugacy class of $G$ is said to be $p$-regular if the elements of the class have order coprime to $p$. It is a theorem of R. Brauer that the number of irreducible $kG$-modules equals the number of $p$-regular conjugacy classes of $G$. Thus the number of isomorphism classes of principal indecomposable $kG$-modules equals the number of $p$-regular conjugacy classes of $G$.

We say that $g\in G$ is real in $G$ if there exists $x\in G$ such that $g^{-1}=xgx^{-1}$, strongly real if $x$ can be chosen to be an involution, and weakly real if it is real but not strongly real.

Now recall that the dual of a $kG$-module $M$ is the (left) $kG$-module $M^*=\op{Hom}_k(M,k)$. Moreover $M$ is self-dual (isomorphic to $M^*$) if and only if $M$ affords a non-degenerate $G$-invariant $k$-valued bilinear form. Clearly $P$ is self-dual if and only if $\op{soc}(P)$ is self-dual. Moreover, the number of isomorphism classes of self-dual irreducible $kG$-modules equals the number of real $p$-regular conjugacy classes of $G$.

\subsection{Symplectic and quadratic forms}

Let $(K,R,k)$ be a $p$-modular system for $G$, where for the moment $p$ is an arbitrary prime integer. So $R$ is discrete valuation ring of characteristic $0$, with unique maximal ideal $J$ containing $p$. Moreover $K$ is the field of fractions of $R$, the residue field $k=R/J$ has characteristic $p$ and $R$ is complete with respect to the topology induced by the valuation. We also assume that $K$ and $k$ are splitting fields for all subgroups of $G$.

In this context every principal indecomposable $kG$-module $P$ has a unique lift to a principal indecomposable $RG$-module $\hat P$ (this means that $\hat P$ is a finitely generated free $RG$-module, which is projective as $RG$-module, and the $kG$-module $\hat P/J\hat P$ is isomorphic to $P$).

We say that a $kG$-module $M$ has quadratic, orthogonal or symplectic type if $M$ affords a non-degenerate $G$-invariant quadratic form, symmetric bilinear form or symplectic bilinear form, respectively.

By a result of R. Gow \cite[8.11]{BH}, if $p$ is odd then $P$ is self-dual if and only if it has orthogonal or symplectic type, and these possibilities are mutually exclusive. Moreover the type of $P$ coincides with the type of $\op{soc}(P)$. Note that quadratic type is the same as orthogonal type, as $p\ne2$.

Suppose now that $p=2$. P. Fong showed that each self-dual non-trivial irreducible $kG$-module has symplectic type. This form is unique up to scalars, by Schur's Lemma. Gow and Willems showed that a principal indecomposable $kG$-module has quadratic type if and only if it has symplectic type. However there may be self-dual principal indecomposable $kG$-modules which are not of quadratic type. In particular, there is no relation between the type of $P$ and the type of $\op{soc}(P)$. The following is proved in \cite{GowM}:

\begin{Proposition}\label{P:strong+weak}
Let $P$ be a self-dual principal indecomposable $kG$-module, let $\Phi$ be the character of\/ $\hat P$ and let $\varphi$ be the Brauer character of\/ $\op{soc}(P)$. Then the following are equivalent:
\begin{itemize}
\item[(i)] $P$ has quadratic type.
\item[(ii)] There is an involution ${t\in G}$ and ${x\in\op{soc}(P)}$ such that\\ $B(tx,x)\ne0$; here $B$ is a symplectic form on $\op{soc}(P)$.
\item[(iii)] There is an involution $t\in G$ and a primitive idempotent $e\in kG$, such that $t^{-1}et=e^o$ and $P\cong kGe$.
\item[(iv)] $\hat P$ has quadratic type.
\item[(v)] $\varphi(g)\not\in2R$, for some strongly real $2$-regular $g\in G$.
\item [(vi)] $\frac{\Phi(g)}{|C_G(g)|}\in2R$, for all weakly real $2$-regular $g\in G$.
\end{itemize}
\end{Proposition}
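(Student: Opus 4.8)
The plan is to prove the six assertions equivalent by first establishing a tight cycle among the form-theoretic conditions (i)--(iv), taking the idempotent condition (iii) as the pivot, and then attaching the two character-theoretic conditions (v) and (vi) by a Frobenius--Schur-type count. Throughout I would use that $kG$ is a symmetric algebra: fix the symmetrising form $\lambda\colon kG\to k$ sending $a=\sum_g a_g g$ to $a_1$, so that $\lambda(ab)=\lambda(ba)$ and $\lambda(a^o)=\lambda(a)$, where $o$ is the anti-automorphism $g\mapsto g^{-1}$. Under $\lambda$ the dual of $P=kGe$ is $kGe^o$, and $G$-invariant bilinear forms on $P$ correspond to elements of $e^okGe$, with transposition of forms matching $a\mapsto a^o$. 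By the Gow--Willems result quoted above, quadratic type and symplectic type of $P$ coincide, so I may speak of a single non-degenerate invariant symplectic form $B$ on $P$ throughout.

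First I would treat the equivalence (i) $\Leftrightarrow$ (iii). Self-duality of $P$ already supplies a unit $u\in kG$ with $u^{-1}eu=e^o$; the content is that $u$ may be taken to be an involution precisely when a quadratic form exists. Writing a quadratic refinement $Q$ of $B$ as $Q(v)=\lambda(v^o a v)$ for some $a\in e^okGe$, one computes that its polarisation corresponds to the symmetrised element $a+a^o$. Thus $P$ has quadratic type iff the symmetric element $b=b^o$ representing $B$ lies in the image of the symmetrisation map $a\mapsto a+a^o$ on $e^okGe$; the obstruction lives in the cokernel of this map, and the core of the argument is to show that an involution $t$ with $t^{-1}et=e^o$ is exactly what witnesses the vanishing of this obstruction, and conversely. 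This char-$2$ symmetrisation analysis is the technical heart, since naive choices such as $a=e^ote$ are $o$-fixed and so give a degenerate polarisation.

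Next I would establish (iii) $\Leftrightarrow$ (ii) by descent to the socle. For any non-degenerate invariant form one has $\op{rad}(P)^\perp=\op{soc}(P)$ on dimension grounds, so $\op{soc}(P)$ is totally isotropic and $B$ induces a perfect pairing between $\op{soc}(P)$ and $P/\op{rad}(P)$, each isomorphic to $S$; by Fong's theorem this yields the symplectic form on $S$ of statement (ii). For an involution $t$ the map $x\mapsto B(tx,x)$ on $S$ is additive and satisfies $B(t(\alpha x),\alpha x)=\alpha^2 B(tx,x)$, so it is a Frobenius-semilinear functional rather than a form; I would show that the self-duality of $S$ induced by the relation $t^{-1}et=e^o$ is measured exactly by this functional, so that (iii) makes it nonzero, while conversely a nonzero value lets one lift back to the idempotent relation on $P$. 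The remaining structural link (i) $\Leftrightarrow$ (iv) I would obtain by reduction modulo $J$: an $R$-valued invariant quadratic form on $\hat P$ reduces to one on $P$, and completeness of $R$ lets me lift; the one delicate point is the absence of $\tfrac12$ in characteristic $2$, handled by arranging the integral symmetric form on $\hat P$ to be even, so that its reduction is genuinely quadratic and not merely symmetric.

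Finally I would attach (v) and (vi) through character theory. The number of involutions $t$ with $t^{-1}et=e^o$ can be written as a class sum and evaluated by the second Frobenius--Schur indicator, producing an expression of shape $\sum_g \Phi(g)/|C_G(g)|$ over real $2$-regular classes $g$; via the projective-character decomposition $\Phi=\sum_\chi d_{\chi\varphi}\chi$ this ties $\Phi$ on $2$-regular classes to $\varphi$. The key observation is that if an involution $t$ inverts a $2$-regular $g$ then $tg$ is again an involution, so strongly real $2$-regular elements are exactly those expressible as products of two involutions; the indicator sum then splits, the strongly real classes contributing a term detected by $\varphi$ modulo $J$ (condition (v)), and the weakly real classes being forced to contribute with even normalised multiplicity (condition (vi)). I expect the main obstacle to be twofold: the char-$2$ symmetrisation computation underlying (i)--(iii), namely showing that the involution can always be extracted from a quadratic form, and the exact bookkeeping in the Frobenius--Schur sum that separates the strongly real from the weakly real contributions at the level of $2$-adic valuations in $R$.
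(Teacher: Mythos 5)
You should know at the outset that this paper contains no proof of Proposition \ref{P:strong+weak} to compare against: it is quoted verbatim as a result of \cite{GowM}, and only the equivalence (i)$\Leftrightarrow$(ii) is ever used. So your proposal has to stand on its own, and as written it is a roadmap rather than a proof: every decisive step is named but deferred. For (i)$\Leftrightarrow$(iii) you say ``the core of the argument is to show that an involution $t$ \dots\ witnesses the vanishing of this obstruction, and conversely''; for (iii)$\Leftrightarrow$(ii) you say ``I would show that the self-duality of $S$ \dots\ is measured exactly by this functional''; for (v) and (vi) you say the indicator sum ``splits'' with the weakly real classes ``forced to contribute with even normalised multiplicity.'' That last clause \emph{is} statement (vi); no mechanism is offered for why weak reality forces $\frac{\Phi(g)}{|C_G(g)|}\in2R$. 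Writing the count of involutions $t$ with $t^{-1}et=e^o$ as a Frobenius--Schur-type sum over real $2$-regular classes is the routine half; separating the strongly real from the weakly real contributions at the level of $2$-adic valuations requires genuine block-theoretic input (in \cite{GowM} this rests on an analysis of the involution module and of projective characters modulo $2$), not a rearrangement of the sum. Your observation that strongly real $2$-regular elements are exactly products of two involutions is correct but is only the first line of that analysis.

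There is also a concrete error in the step you yourself identify as the technical heart. The formula $Q(v)=\lambda(v^o a v)$ does not define a $G$-invariant function: $(gv)^o a(gv)=v^o g^{-1}agv$, so invariance fails unless $a$ is fixed by conjugation. The invariant parametrisation uses the other pairing: $G$-invariant bilinear forms on $P=kGe$ are $B_c(x,y)=\lambda(x^o y\,c)$ with $c\in e\,kG\,e^o$ (invariant because $(gx)^o(gy)=x^o y$), the transpose of $B_c$ is $B_{c^o}$, the invariant quadratic candidates are $Q_c(v)=\lambda(v^o v\,c)$, and the polarisation of $Q_c$ is $B_{c+c^o}$. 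Your cokernel-of-symmetrisation picture survives in this corrected setting, but two things remain unproved: that \emph{every} $G$-invariant quadratic refinement of $B$ is of the shape $Q_c$ (in characteristic $2$ the map $c\mapsto Q_c$ has a kernel and its surjectivity onto invariant quadratic forms needs an argument, e.g.\ via projectivity of $P$), and the extraction of an actual involution $t$ from the vanishing of the obstruction, which is where the real work lies. Similarly, in (i)$\Leftrightarrow$(iv) the delicate direction is lifting a quadratic form from $P$ to $\hat P$, and ``arranging the integral symmetric form on $\hat P$ to be even'' is asserted, not shown. In sum: your architecture (pivot on (iii), descend to $\op{soc}(P)$ for (ii), attach (v)--(vi) by an indicator count) matches the shape of the argument in \cite{GowM}, but with the pivot equivalence resting on a non-invariant parametrisation and all three hard steps deferred, the proposal does not yet constitute a proof of any single implication.
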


We will use the equivalence (i)$\Longleftrightarrow$(ii) to prove Theorem \ref{T:main}.

\section{The double covers of alternating groups}

\subsection{Strongly real classes}

For $n\geq2$, the alternating group $\alt_n$ is the subgroup of even permutations in the symmetric group $\sym_n$. For $n\geq4$, $\alt_n$ has a double cover group $2.\alt_n$, which is unique up to isomorphism, and which is a subgroup of either of the (generally) two isoclinic double covers $2.\sym_n$ of $\sym_n$. Now $\{\alt_n\mid n\geq5\}$ is an infinite family of finite simple groups. So $2.\alt_n$ is a perfect group, for $n\geq5$, and is known to be a Schur covering group of $\alt_n$ if  $n\ge5$, $n\ne6,7$.

Given distinct $i_1,i_2,\dots,i_m\in\{1,\dots,n\}$, we use $(i_1,i_2,\dots,i_m)$ to denote an $m$-cycle in $\sym_n$. So $(i_1,i_2,\dots,i_m)$ maps $i_j$ to $i_{j+1}$, for $j=1,\dots,m-1$, sends $i_m$ to $i_1$ and fixes all $i\ne i_1,\dots,i_m$. Now each permutation $\sigma\in\sym_n$ has a unique factorization as a product of disjoint cycles. If we arrange the lengths of these cycles in a non-increasing sequence, we get a partition of $n$, which is called the cycle type of $\sigma$. The set of permutations with a fixed cycle type $\lambda$ is a conjugacy class of $\sym_n$, here denoted $C_\lambda$. In particular the $2$-regular conjugacy classes of $\sym_n$ are indexed by the set ${\mathcal O}(n)$ of partitions of $n$ whose parts are odd.

A transposition in $\sym_n$ is a $2$-cycle $(i,j)$ where $i,j$ are distinct elements of $\{1,\dots,n\}$. So $(i,j)$ has cycle type $(2,1^{n-2})$. It is clear that there is one conjugacy class of involutions for each partition $(2^m,1^{n-2m})$ of $n$, with $1\leq m\leq n/2$. We call a product of $m$-disjoint transpositions an $m$-involution in $\sym_n$. It follows that $\sym_n$ has $\lfloor\frac{n}{2}\rfloor$ conjugacy classes of involutions; the $m$-involutions, for $1\leq m\leq n/2$.

Suppose that $\pi=(i_1,i_{1+m})(i_2i_{2+m})\dots(i_m,i_{2m})$ is an $m$-involution in $\sym_n$. Then we say that $(i_1,i_{1+m}),(i_2i_{2+m}),\dots,(i_m,i_{2m})$ are the transpositions in $\pi$ and write $(i_j,i_{j+m})\in \pi$, for $j=1,\dots,m$. Notice that each $\{i_j,i_{j+m}\}$ is a non-singleton orbit of $\pi$ on $\{1,\dots,n\}$.

Given a partition $\lambda$ of $n$, we have $C_\lambda\subseteq\alt_n$ if and only if $\lambda$ is even ($n\equiv\ell(\lambda)\mod2$). Moreover for $\lambda$ even, $C_\lambda$ is a union of two conjugacy classes of $\alt_n$ if $\lambda$ has distinct odd parts and otherwise $C_\lambda$ is a single conjugacy class of $\alt_n$. In either case we use $C_{\lambda,A}$ to denote an $\alt_n$-conjugacy class contained in $C_\lambda$. If $\lambda$ has distinct odd parts then $C_{\lambda,A}$ is a real conjugacy class of $\alt_n$ if and only if $n\equiv\ell(\lambda)\mod4$.

Next let $z\in2.\alt_n$ be the involution which generates the centre of $2.\alt_n$. As $\langle z\rangle$ is a central $2$-subgroup of $2.\alt_n$, there is a one-to-one correspondence between the $2$-regular conjugacy classes of $2.\alt_n$ and the $2$-regular conjugacy classes of $\alt_n\cong(2.\alt_n)/\langle z\rangle$; if $\lambda$ is an odd partition of $n$ the preimage of $C_{\lambda,A}$ in $2.\alt_n$ consists of a single class $\hat C_{\lambda,A}$ of odd order elements and another class $z\hat C_{\lambda,A}$ of elements whose $2$-parts equal $z$.

Notice that an $m$-involution belongs to $\alt_n$ if and only if $m$ is even. Moreover, the $2m$-involutions form a single conjugacy class of $\alt_n$. So $\alt_n$ has $\lfloor\frac{n}{4}\rfloor$ conjugacy classes of involutions; the $2m$-involutions, for $1\leq m\leq n/4$. Now each $2m$-involution in $\alt_n$ is the image of two involutions in $2.\alt_n$, if $m$ is even, or is the image of two elements of order $4$ in $2.\alt_n$, if $m$ is odd.

Set $m_o(\lambda)$ as the number of parts which occur with odd multiplicity in $\lambda$.

\begin{Lemma}\label{L:strongly_real}
If $\lambda$ is a partition of $n$ with all parts odd then $\hat C_{\lambda,A}$ is a strongly real conjugacy class of\/ $2.\alt_n$ if and only if there is an integer $m$ such that $\frac{n-\ell(\lambda)}{2}\leq4m\leq\frac{n-m_o(\lambda)}{2}$.
\end{Lemma}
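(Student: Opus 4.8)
The plan is to reduce strong reality of $\hat C_{\lambda,A}$ to a purely combinatorial question about permutations in $\sym_n$, and then to count transpositions. Fix $\sigma\in C_{\lambda,A}$ and let $\hat\sigma\in\hat C_{\lambda,A}$ be its lift of odd order. By definition $\hat C_{\lambda,A}$ is strongly real exactly when there is an involution $t\in2.\alt_n$ with $t\hat\sigma t^{-1}=\hat\sigma^{-1}$. The first observation I would make is that the choice of lift causes no difficulty: if $t$ is any involution of $2.\alt_n$ whose image $\bar t\in\alt_n$ satisfies $\bar t\sigma\bar t^{-1}=\sigma^{-1}$, then $t\hat\sigma t^{-1}$ is a conjugate of $\hat\sigma$, hence of odd order, and it lies above $\sigma^{-1}$; since the two preimages of $\sigma^{-1}$ are $\hat\sigma^{-1}$ (of odd order) and $z\hat\sigma^{-1}$ (of even order), we must have $t\hat\sigma t^{-1}=\hat\sigma^{-1}$. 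Thus no $2$-cocycle computation in the covering group is needed, and the problem becomes: does there exist an involution $\bar t\in\alt_n$ with $\bar t\sigma\bar t^{-1}=\sigma^{-1}$ that lifts to an involution of $2.\alt_n$?

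By the description of involutions recalled above, a $2m$-involution of $\alt_n$ lifts to an involution of $2.\alt_n$ precisely when $m$ is even, that is, precisely when the number of transpositions it involves is divisible by $4$. So I would next determine the possible numbers of transpositions in an involution $\bar t\in\sym_n$ with $\bar t\sigma\bar t^{-1}=\sigma^{-1}$, and check whether a multiple of $4$ occurs among them; note that divisibility by $4$ forces $\bar t\in\alt_n$ automatically.

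Such a $\bar t$ permutes the cycles of $\sigma$ as an involution, so each cycle is either reversed in place or interchanged with a second cycle of equal length. I would verify the two local counts: reversing a single $(2s+1)$-cycle is a reflection using exactly $s$ transpositions (with $2s+1$ choices of axis), while interchanging two $(2s+1)$-cycles uses exactly $2s+1$ transpositions. Hence for a part value $2s+1$ occurring with multiplicity $a$, reversing $r\equiv a\Mod2$ of the cycles and pairing off the remaining $\tfrac{a-r}{2}$ of them contributes $as+\tfrac{a-r}{2}$ transpositions, which runs through every integer in $\bigl[as,\,as+\lfloor a/2\rfloor\bigr]$. Summing over the distinct part values $2s+1$ with multiplicities $a$, and using $\sum as=\tfrac{n-\ell(\lambda)}{2}$ together with $\sum\lfloor a/2\rfloor=\tfrac{\ell(\lambda)-m_o(\lambda)}{2}$, I would conclude that the total number $T$ of transpositions in $\bar t$ ranges over \emph{every} integer in the interval $\bigl[\tfrac{n-\ell(\lambda)}{2},\ \tfrac{n-m_o(\lambda)}{2}\bigr]$.

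Combining these steps, $\hat C_{\lambda,A}$ is strongly real if and only if this interval contains a multiple of $4$, which is exactly the condition that there is an integer $m$ with $\tfrac{n-\ell(\lambda)}{2}\le4m\le\tfrac{n-m_o(\lambda)}{2}$. I expect the main obstacle to be the combinatorial heart of the third paragraph: showing not merely that the extreme values are attained but that \emph{all} intermediate values of $T$ are realised, so that the existence of a multiple of $4$ in the interval is both necessary and sufficient. The order argument in the first paragraph is the device that lets me avoid the otherwise delicate sign computation in $2.\sym_n$.
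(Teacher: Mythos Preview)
Your proof is correct and follows essentially the same route as the paper: both analyse how an inverting involution acts on the cycles of $\sigma$ (fixing or swapping them in pairs), obtain the formula showing the number of transpositions sweeps out every integer in $\bigl[\tfrac{n-\ell(\lambda)}{2},\tfrac{n-m_o(\lambda)}{2}\bigr]$, and then invoke the description of which $\alt_n$-involutions lift to involutions in $2.\alt_n$. Your odd-order argument in the first paragraph makes explicit a point the paper leaves implicit, but otherwise the arguments coincide.
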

\begin{proof}
Let $\sigma\in\alt_n$ have cycle type $\lambda$ and let $\pi$ be an $m$-involution in $\sym_n$ which inverts $\sigma$. Set $\ell:=\ell(\lambda)$, and let $X_1,\dots,X_\ell$ be the orbits of $\sigma$ on $\{1,\dots,n\}$. Then $\pi$ permutes the sets $X_1,\dots,X_\ell$.

Let $j>0$. If $\pi X_j=X_j$, then $\pi$ fixes a unique element of $X_j$, and hence acts as an $\frac{|X_j|-1}{2}$-involution on $X_j$. If instead $\pi X_j\ne X_j$, then $\pi$ is a bijection $X_j\rightarrow\pi X_j$. So $\pi$ acts as an $|X_j|$-involution on $X_j\cup\pi X_j$. We may order the $X_j$ and choose $k\geq0$ such that $\pi X_j=X_{j+k}$, for $j=1,\dots,k$, and $\pi X_j=X_j$, for $j=2k+1,2k+2,\dots,\ell$. Then from above
$$
m=\sum_{j=1}^k\frac{|X_j|+|X_{j+k}|}{2}+\sum_{j=2k+1}^\ell\frac{|X_j|-1}{2}=\frac{n+2k-\ell}{2}.
$$
Now the maximum value of $2k$ is $2k=\ell-m_o(\lambda)$, when $\pi$ pairs the maximum number of orbit of $\sigma$ which have equal size. This implies that $m\leq\frac{n-m_o(\lambda)}{2}$. The minimum value of $2k$ is $0$. This occurs when $\pi$ fixes each orbit of $\sigma$. It follows from this that $m\geq\frac{n-\ell(\lambda)}{2}$.

Conversely, it is clear that for each $m>0$ with $\frac{n-\ell}{2}\leq m\leq\frac{n-m_o(\lambda)}{2}$, there is an $m$-involution $\pi\in\sym_n$ which inverts $\sigma$; $\pi$ pairs $\ell+2m-n$ orbits of $\sigma$ and fixes the remaining $n-2m$ orbits of $\sigma$. The conclusion of the Lemma now follows from our description of the involutions in $2.\alt_n$.
\end{proof}

\subsection{The irreducible modules}

By an $n$-tabloid we mean an indexed collection $R=(R_1,\dots R_\ell)$ of non-empty subsets of $\{1,\dots,n\}$ which are pairwise disjoint and whose union is $\{1,\dots,n\}$ (also known as an ordered partition of $\{1,\dots,n\}$). We shall refer to $R_1,\dots,R_\ell$ as the rows of $R$. Set $\lambda_i:=|R_i|$. Then we may choose indexing so that $\lambda=(\lambda_1,\dots,\lambda_\ell)$ is a partition of $n$, which we call the type of $R$. Now $\sym_n$ acts on all $\lambda$-tabloids; the corresponding permutation module (over ${\mathbb Z}$) is denoted $M^\lambda$.

Next recall that the Young diagram of $\lambda$ is a collection of boxes in the plane, oriented in the anglo-american tradition: the first row consists of $\lambda_1$ boxes. Then for $i=2,\dots,\ell$ in turn, the $i$-th row consists of $\lambda_i$ boxes placed directly below the $(i-1)$-th row, with the leftmost box in row $i$ directly below the leftmost box in row $i-1$.

By a $\lambda$-tableau we shall mean a bijection $t:[\lambda]\rightarrow\{1,\dots,n\}$, or a filling of the boxes in the Young diagram with the symbols $1,\dots,n$. So for $1\leq r\leq\ell$ and $1\leq c\leq\lambda_r$, we use $t(r,c)$ to denote the image of the position $(r,c)\in[\lambda]$ in $\{1,\dots,n\}$. Conversely, given $i\in\{1,\dots,n\}$, there is a unique $r=r_t(i)$ and $c=c_t(i)$ such that $t(r,c)=i$. We say that $i$ is in row $r$ and column $c$ of $t$.

Clearly there are $n!$ tableau of type $\lambda$ and $\sym_n$ acts regularly on the set of $\lambda$-tableau. For $\sigma\in\sym_n$, we define $\sigma t:[\lambda]\rightarrow\{1,\dots,n\}$ as the composition $(\sigma t)(r,c)=\sigma(t(r,c))$, for all $(r,c)\in[\lambda]$. In other words, the permutation module of $\sym_n$ acting on tableau is (non-canonically) isomorphic to the regular module ${\mathbb Z}\sym_n$; once we fix $t$, we may identify $\sigma\in\sym_n$ with the tableau $\sigma t$.

Associated with $t$, we have two important subgroups of $\sym_n$. The column stabilizer of $t$ is $C_t:=\{\sigma\in\sym_n\mid c_t(i)=c_t(\sigma i)$, for $i=1,\dots,n\}$ and 
the row stabilizer of $t$ is $R_t:=\{\sigma\in\sym_n\mid r_t(i)=r_t(\sigma i)$, for $i=1,\dots,n$.

We use $\{t\}$ to denote the tabloid formed by the rows of $t$. So $\{t\}_r:=\{t(r,c)\mid1\leq c\leq\lambda_r\}$, for $r=1,\dots,\ell$. Also $\{s\}=\{t\}$ if and only if $s=\sigma t$, for some $\sigma\in R_t$.
Notice that the actions of $\sym_n$ on tableau and tabloids are compatible, in the sense that $\sigma\{t\}=\{\sigma t\}$. In other words, the map $t\mapsto\{t\}$ induces a surjective $\sym_n$-homomorphism ${\mathbb Z}\sym_n\rightarrow M^\lambda$. The kernel of this homomorphism is the ${\mathbb Z}$-span of $\{\sigma t\mid \sigma\in R_t\}$.

The polytabloid $e_t$ associated with $t$ is the following element of $M^\lambda$:
$$
e_t:=\sum_{\sigma\in C_t}\op{sgn}(\sigma)\{\sigma t\}.
$$
We use $\op{supp}(t):=\{\{\sigma t\}\mid \sigma\in C_t\}$ to denote the set of tabloids which occur in the definition of $e_t$. Note that $e_{\pi t}=\op{sgn}(\pi)e_t$, for all $\pi\in C_t$. In particular, if $r_t(i)=r_t(j)$, then $e_{(i,j)t}=-e_t$. Also if $\pi\in\sym_n$, then $C_{\pi t}=\pi C_t\pi^{-1}$ and $R_{\pi t}=\pi R_t\pi^{-1}$. So $e_{\pi t}=\pi e_t$ and $\op{supp}(\pi t)=\pi\op{supp}(t)$.

The ${\mathbb Z}$-span of all $\lambda$-polytabloids is a $\sym_n$-submodule of $M^\lambda$ called the Specht module. It is denoted by $S^\lambda$. So $S^\lambda$ is a finitely generated free ${\mathbb Z}$-module (${\mathbb Z}$-lattice).

\subsection{Involutions and bilinear forms}

Let $\mu=(\mu_1>\dots>\mu_{2s-1}>\mu_{2s}\geq0)$ be a partition of $n$ which has distinct parts. Set $\langle\,,\,\rangle$ as the symmetric bilinear form on $M^\mu$ with respect to which the $\mu$-tabloids form an orthonormal basis. Then $\langle\,,\,\rangle$ restricts to a bilinear form on $S^\mu$. Set $(S^\mu)^\perp:=\{m\in M^\mu\mid\langle m,s\rangle\in 2{\mathbb Z},\forall s\in S^\mu\}$. According to James, $D^\mu:=S^\mu/S^\mu\cap(S^\mu)^\perp$ is a non-zero irreducible $k\sym_n$-module.

Benson's classification of the irreducible $k\alt_n$-modules \cite{Benson}, and our classification of the self-dual irreducible $k\alt_n$-modules \cite{Murray17}, as mentioned in the introduction, is given by:

\begin{Lemma}\label{L:Benson_Murray}
${D^\mu\downarrow_{\alt_n}}$ is reducible if and only if for each $j>0$
$$
\mbox{(i) }\,\mu_{2j-1}-\mu_{2j}=\mbox{$1$ or $2$\qquad and}\quad\mbox{(ii) }\,\mu_{2j-1}+\mu_{2j}\not\equiv2\Mod4.
$$
When $D^\mu\downarrow_{\alt_n}$ is reducible, its irreducible direct summands are self-dual if and only if\/ $\sum_{j>0}\mu_{2j}$ is even.
\end{Lemma}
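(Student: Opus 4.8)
The plan is to treat the two assertions of the lemma separately, since the reducibility criterion (i)--(ii) is Benson's theorem from \cite{Benson} while the self-duality criterion on the parity of $\sum_{j>0}\mu_{2j}$ is the author's earlier result from \cite{Murray17}; the lemma merely records their combination. I would therefore first set up the Clifford-theoretic framework common to both statements and then indicate how each criterion is extracted from it.

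For the reducibility statement I would start from the fact that $D^\mu$ is an irreducible self-dual $k\sym_n$-module and that $[\sym_n:\alt_n]=2$. By Clifford theory ${D^\mu\downarrow_{\alt_n}}$ is either irreducible or a direct sum of two conjugate irreducibles, and it splits precisely when $\op{End}_{k\alt_n}(D^\mu{\downarrow_{\alt_n}})$ is $2$-dimensional. By Frobenius reciprocity this endomorphism algebra is $\op{Hom}_{k\sym_n}(D^\mu,D^\mu\otimes k[\sym_n/\alt_n])$. Since $\op{char}k=2$ the permutation module $k[\sym_n/\alt_n]$ is the non-semisimple algebra $k[\mathbb Z/2]$, so this Hom-space is governed not by the sign character (which is trivial here) but by a next-order obstruction: whether a fixed transposition $t$ acts on $D^\mu$ by a scalar plus a nilpotent part that annihilates a copy of $D^\mu$. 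The core of the argument is to make this obstruction explicit on James' model $D^\mu=S^\mu/(S^\mu\cap(S^\mu)^\perp)$, tracking the action of $t$ on polytabloids $e_t$ column by column; the paired-parts conditions $\mu_{2j-1}-\mu_{2j}\in\{1,2\}$ and $\mu_{2j-1}+\mu_{2j}\not\equiv2\Mod4$ are exactly the combinatorial residue of this calculation.

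For the self-duality statement I would assume the restriction has already split as ${D^\mu\downarrow_{\alt_n}}=D_A^\mu\oplus D_B^\mu$, with $D_B^\mu$ the $\sym_n$-conjugate of $D_A^\mu$. Because $D^\mu$ is self-dual, applying duality to this decomposition forces either both summands to be self-dual or $(D_A^\mu)^*\cong D_B^\mu$. To decide between these I would use Fong's $\sym_n$-invariant symplectic form $\beta$ on $D^\mu$ (Proposition~\ref{P:strong+weak} records such a form $B$ on the socle) and ask whether $D_A^\mu$ is totally isotropic under $\beta$: isotropy corresponds to the dual case, while non-degeneracy of $\beta$ on each summand corresponds to the self-dual case. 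The decision reduces to evaluating $\beta$ on the images of polytabloids and tracking the sign $\op{sgn}(\pi)$ accumulated as one moves between the two summands; the relevant sign works out to $(-1)^{\sum_{j>0}\mu_{2j}}$, which is the asserted parity condition. The delicate point throughout is descent: the form and the splitting live on $D^\mu$, not on $S^\mu$, so every sign computation on polytabloids must be checked to survive the quotient by $S^\mu\cap(S^\mu)^\perp$.

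I expect the main obstacle to be precisely this descent, together with the explicit column-by-column analysis of the transposition action in the reducibility step. Verifying that the conditions (i)--(ii), respectively the parity of $\sum_{j>0}\mu_{2j}$, are not merely necessary but sufficient requires controlling how the radical $S^\mu\cap(S^\mu)^\perp$ sits inside $S^\mu$, which is where the bulk of Benson's and the author's work is concentrated. In a self-contained write-up I would import these two combinatorial conclusions from \cite{Benson} and \cite{Murray17} rather than reproduce them in full, and confine my own verification to checking that the framework above correctly interprets them.
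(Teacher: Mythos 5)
Your proposal coincides in substance with the paper, which states Lemma \ref{L:Benson_Murray} without proof, presenting it purely as a record of Benson's reducibility criterion \cite{Benson} together with the author's self-duality criterion \cite{Murray17} --- exactly the strategy you settle on when you import the two combinatorial conclusions from those references. Your surrounding Clifford-theoretic scaffolding is correct in outline (in characteristic $2$ the sign character is trivial and the twisted case $e=2$ cannot occur, so the restriction is irreducible or a sum of two non-isomorphic conjugates, and self-duality of $D^\mu$ forces the dichotomy between both summands self-dual and $(D_A^\mu)^*\cong D_B^\mu$), though a detail such as tracking $\op{sgn}(\pi)$ would be vacuous in characteristic $2$ and would have to be recast on the integral Specht lattice; since nothing beyond the citations is actually required, this does not affect correctness.
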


We note the following:
 
\begin{Lemma}
Suppose that $\mu\ne(n)$ and let $\phi:S^\mu\rightarrow D^\mu$ be the ${\mathbb Z}\sym_n$-projection. Set $B(\pi x,\pi y):=\langle x,y\rangle$, for $x,y\in S^\mu$. Then $B$ is a well-defined non-zero symplectic bilinear form on $D^\mu$.
\end{Lemma}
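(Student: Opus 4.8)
The plan is to realize $B$ as the reduction modulo $2$ of the integral form $\langle\,,\,\rangle$, transported across the projection $\phi$. Concretely, for $u,v\in D^\mu$ I would choose preimages $x,y\in S^\mu$ with $\phi(x)=u$, $\phi(y)=v$ and set $B(u,v):=\langle x,y\rangle+2{\mathbb Z}$, viewed inside $k$. The first thing to verify is that this is independent of the choice of preimages. If $x'$ is a second preimage of $u$ then $x-x'\in S^\mu\cap(S^\mu)^\perp$, so $\langle x-x',y\rangle\in2{\mathbb Z}$ by the very definition of $(S^\mu)^\perp$; combining this with the analogous statement in the second variable (and symmetry) via the bilinearity of $\langle\,,\,\rangle$ gives $\langle x,y\rangle\equiv\langle x',y'\rangle\Mod2$. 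Bilinearity, symmetry and $\sym_n$-invariance of $B$ are then immediate, being inherited from the corresponding properties of $\langle\,,\,\rangle$ (the $\mu$-tabloids form an orthonormal basis permuted by $\sym_n$), together with the fact that $\phi$ is a ${\mathbb Z}\sym_n$-homomorphism, so that $\sigma\,\phi(x)=\phi(\sigma x)$ and hence $B(\sigma u,\sigma v)=B(u,v)$.

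The substantive point, and the step where the hypothesis $\mu\ne(n)$ is genuinely needed, is that $B$ is symplectic, i.e.\ $B(u,u)=0$ for all $u\in D^\mu$. Since $B(u+v,u+v)=B(u,u)+B(v,v)$ and $B(cu,cu)=c^2B(u,u)$ in characteristic $2$, the function $u\mapsto B(u,u)$ vanishes identically as soon as it vanishes on a spanning set; as $D^\mu$ is spanned by the images of the polytabloids $e_t$, it suffices to prove $\langle e_t,e_t\rangle\in2{\mathbb Z}$. I would compute this directly from $e_t=\sum_{\sigma\in C_t}\op{sgn}(\sigma)\{\sigma t\}$: orthonormality of the tabloids gives $\langle e_t,e_t\rangle=\sum_{\sigma,\tau\in C_t}\op{sgn}(\sigma)\op{sgn}(\tau)\langle\{\sigma t\},\{\tau t\}\rangle$, and for $\sigma,\tau\in C_t$ one has $\{\sigma t\}=\{\tau t\}$ exactly when $\tau^{-1}\sigma\in R_t\cap C_t=\{1\}$. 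Thus only the diagonal terms survive and $\langle e_t,e_t\rangle=|C_t|$. Since $\mu\ne(n)$ has at least two parts, the first column of $[\mu]$ has length $\ell(\mu)\ge2$, whence $2\mid|C_t|$ and the claim follows. (For $\mu=(n)$ one instead gets $|C_t|=1$, which is precisely why that case is excluded.) I expect this evenness of $\langle e_t,e_t\rangle$ to be the only place demanding real work.

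Finally I would record non-degeneracy, which in particular yields the asserted non-vanishing. The radical of $B$ is $\{\phi(x)\mid x\in S^\mu,\ \langle x,s\rangle\in2{\mathbb Z}\ \text{for all }s\in S^\mu\}=\phi\bigl(S^\mu\cap(S^\mu)^\perp\bigr)=0$, so $B$ is non-degenerate and hence non-zero as $D^\mu\ne0$. Equivalently, were $B$ identically zero one would have $S^\mu\subseteq(S^\mu)^\perp$, forcing $D^\mu=0$, contrary to James's theorem that $D^\mu$ is a non-zero irreducible module.
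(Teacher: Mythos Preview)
The paper states this lemma without proof, treating it as a routine fact about James's construction. Your argument is correct and fills in exactly the details one would expect: well-definedness via the definition of $(S^\mu)^\perp$, the alternating property via the explicit computation $\langle e_t,e_t\rangle=|C_t|$ (using $R_t\cap C_t=\{1\}$) together with $2\mid|C_t|$ when $\ell(\mu)\ge2$, and non-degeneracy directly from $\ker\phi=S^\mu\cap(S^\mu)^\perp$. There is nothing to compare against, but nothing to correct either.
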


Notice that if $x,y\in D^\mu$ and $\pi$ is an involution in $\sym_n$ then
$$
B(\pi(x+y),x+y)=B(\pi x,x)+B(\pi y,y).
$$
So we can focus on a single polytabloid in $S^\lambda$: 

\begin{Lemma}\label{L:supp}
If\/ $t$ is a $\mu$-tableau and $\pi$ is an involution in $\sym_n$, then
$$
\langle \pi e_t,e_t\rangle\equiv|\{T\in\op{supp}(\pi t)\cap\op{supp}(t)\mid \pi T=T\}|\mod2.
$$
In particular, if\/ $\langle \pi e_t,e_t\rangle$ is odd, then $\pi\in R_{\sigma t}$, for some $\sigma\in C_t$.
\begin{proof}
We have
$$
\begin{aligned}
\langle \pi e_t,e_t\rangle
&=\sum_{\sigma_1,\sigma_2\in C_t}\op{sgn}(\pi\sigma_1\pi^{-1})\op{sgn}(\sigma_2)\langle(\pi\sigma_1\{t\},\sigma_2\{t\}\rangle\\
&\equiv|\{ (\sigma_1,\sigma_2)\in C_t\times C_t\mid\pi\sigma_1\{t\}=\sigma_2\{t\} \}|\Mod2\\
&=|\op{supp}(\pi t)\cap\op{supp}(t)|.
\end{aligned}
$$
Now notice that $T\mapsto\pi T$ is an involution on $\op{supp}(\pi t)\cap\op{supp}(t)$. So $|\op{supp}(\pi t)\cap\op{supp}(t)|\equiv|\{T\in\op{supp}(\pi t)\cap\op{supp}(t)\mid \pi T=T\}|\mod2$.

Suppose that $\langle \pi e_t,e_t\rangle$ is odd. Then by the above, there exists $\sigma\in C_t$ such that $\pi\{\sigma t\}=\{\sigma t\}$. This means that $\pi\in R_{\sigma t}$. 
\end{proof}
\end{Lemma}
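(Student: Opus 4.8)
The plan is to evaluate the integer $\langle\pi e_t,e_t\rangle$ directly from the definition of the polytabloid and then reduce modulo $2$. First I would use the relation $\pi e_t=e_{\pi t}$ together with bilinearity to write
\[
\langle\pi e_t,e_t\rangle=\sum_{\sigma_1,\sigma_2\in C_t}\op{sgn}(\sigma_1)\op{sgn}(\sigma_2)\,\langle\pi\sigma_1\{t\},\sigma_2\{t\}\rangle.
\]
Because the $\mu$-tabloids form an orthonormal basis for $\langle\,,\,\rangle$, each inner product $\langle\pi\sigma_1\{t\},\sigma_2\{t\}\rangle$ equals $1$ when $\pi\sigma_1\{t\}=\sigma_2\{t\}$ and $0$ otherwise. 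Reducing modulo $2$ then kills every sign, so $\langle\pi e_t,e_t\rangle$ is congruent mod $2$ to the number of pairs $(\sigma_1,\sigma_2)\in C_t\times C_t$ with $\pi\sigma_1\{t\}=\sigma_2\{t\}$.

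Next I would identify this count with $|\op{supp}(\pi t)\cap\op{supp}(t)|$. The key point here—and the step I expect to require the most care—is that the parametrization $\sigma\mapsto\{\sigma t\}$ is a bijection from $C_t$ onto $\op{supp}(t)$; this rests on the fact that $C_t\cap R_t=\{1\}$, so that distinct column permutations produce distinct tabloids. Granting this, a pair $(\sigma_1,\sigma_2)$ contributing to the count corresponds exactly to a tabloid $\{\sigma_2 t\}$ lying in $\op{supp}(t)$ which, since $\{\sigma_2 t\}=\pi\{\sigma_1 t\}\in\pi\op{supp}(t)=\op{supp}(\pi t)$, also lies in $\op{supp}(\pi t)$; conversely each tabloid in the intersection arises from a unique such pair. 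Hence $\langle\pi e_t,e_t\rangle\equiv|\op{supp}(\pi t)\cap\op{supp}(t)|\Mod2$.

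To pass from the cardinality of the intersection to the number of its $\pi$-fixed points, I would observe that $\pi$ maps $\op{supp}(\pi t)\cap\op{supp}(t)$ to itself: since $\pi^2=1$ we have $\pi\op{supp}(\pi t)=\op{supp}(t)$ and $\pi\op{supp}(t)=\op{supp}(\pi t)$, so $\pi$ interchanges the two defining sets and therefore preserves their intersection. An involution on a finite set moves its non-fixed points in disjoint $2$-cycles, so the cardinality of the set is congruent mod $2$ to the number of points it fixes, which is precisely the asserted congruence. Finally, for the \emph{in particular} clause, if $\langle\pi e_t,e_t\rangle$ is odd then the number of $\pi$-fixed tabloids in the intersection is odd and hence nonzero; choosing a fixed tabloid $\{\sigma t\}\in\op{supp}(t)$ with $\pi\{\sigma t\}=\{\sigma t\}$ exhibits $\sigma\in C_t$ such that $\pi$ stabilizes the tabloid $\{\sigma t\}$, that is, $\pi\in R_{\sigma t}$.
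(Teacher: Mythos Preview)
Your proof is correct and follows essentially the same approach as the paper's: expand the bilinear form over $C_t\times C_t$, reduce modulo $2$ to a count of pairs, identify that count with $|\op{supp}(\pi t)\cap\op{supp}(t)|$, and then use the involution $T\mapsto\pi T$ on the intersection to reduce to fixed points. Your version is in fact a little more explicit than the paper's in two places: you spell out why $\sigma\mapsto\{\sigma t\}$ is injective on $C_t$ (via $C_t\cap R_t=\{1\}$), and you justify carefully why $\pi$ preserves the intersection $\op{supp}(\pi t)\cap\op{supp}(t)$.
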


\begin{Lemma}\label{L:upper}
Let $t$ be a $\mu$-tableau and let $m$ be a positive integer such that $\langle \pi e_t,e_t\rangle$ is odd, for some $m$-involution $\pi\in\sym_n$. Then $m\leq\frac{n-\ell_o(\mu)}{2}$ and $\pi$ fixes at most one entry in each column of\/ $t$.
\begin{proof}
By the previous Lemma, we may assume that $\pi\in R_t$. Now $R_t\cong\sym_\mu$. For $i>0$, there is $j$-involution in $\sym_i$ for $j=1,\dots\lfloor\frac{i}{2}\rfloor$. So there is an $m$-involution in $R_t$ if and only if
$$
m\leq\sum\left\lfloor\frac{\mu_i}{2}\right\rfloor=\sum_{\mu_i\mbox{ even}}\frac{\mu_i}{2}+\sum_{\mu_i\mbox{ odd}}\frac{\mu_i-1}{2}=\frac{n-\ell_o(\mu)}{2}
$$

Let $i,j$ belong to a single column of $t$. We claim that $i,j$ belong to different columns of $\pi t$. For suppose otherwise. Then $(i,j)\in C_t\cap C_{\pi t}$. So the map $T\mapsto(i,j)T$ is an involution on $\op{supp}(\pi t)\cap\op{supp}(t)$ which has no fixed-points. In particular $|\op{supp}(\pi t)\cap\op{supp}(t)|$ is even, contrary to hypothesis. This proves the last assertion.
\end{proof}
\end{Lemma}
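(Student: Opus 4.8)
The plan is to build directly on Lemma~\ref{L:supp}, which already turns the oddness hypothesis into the structural statement $\pi\in R_{\sigma t}$ for some $\sigma\in C_t$. My first move is to reduce to the case $\pi\in R_t$. Since $\sigma\in C_t$ merely permutes entries within the columns of $t$, the tableau $\sigma t$ has the same columns as $t$ (as sets of entries), while $e_{\sigma t}=\op{sgn}(\sigma)e_t$ forces $\langle\pi e_{\sigma t},e_{\sigma t}\rangle=\langle\pi e_t,e_t\rangle$ to remain odd. Replacing $t$ by $\sigma t$ therefore alters neither conclusion of the lemma, so I may assume from the outset that $\pi\in R_t$.

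The bound on $m$ is then a counting statement inside the row stabilizer. Since $R_t\cong\sym_\mu=\sym_{\mu_1}\times\dots\times\sym_{\mu_\ell}$, an involution in $R_t$ is a commuting product of transpositions distributed among the rows, and its factor in $\sym_{\mu_i}$ can use at most $\lfloor\mu_i/2\rfloor$ transpositions. Hence the number of transpositions in any involution of $R_t$ is at most $\sum_i\lfloor\mu_i/2\rfloor$; splitting this sum according to the parity of the parts gives exactly $\frac{n-\ell_o(\mu)}{2}$. As $\pi$ is an $m$-involution lying in $R_t$, this yields $m\leq\frac{n-\ell_o(\mu)}{2}$.

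For the column assertion I would argue by contradiction, producing a fixed-point-free involution on the overlap of supports. Suppose $i,j$ lie in one column of $t$ but, for contradiction, also in one column of $\pi t$. Then the transposition $(i,j)$ lies in $C_t\cap C_{\pi t}$, so left multiplication by $(i,j)$ preserves $\op{supp}(t)$, $\op{supp}(\pi t)$, and hence their intersection. The key observation is that this map $T\mapsto(i,j)T$ has \emph{no} fixed point on $\op{supp}(t)$: every $T\in\op{supp}(t)$ is $\{\tau t\}$ for some $\tau\in C_t$, and because $\tau$ preserves columns, $i$ and $j$ still occupy two distinct cells of a single column of $\tau t$, so they lie in different rows of $T$ and $(i,j)$ genuinely moves $T$. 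Thus $\op{supp}(\pi t)\cap\op{supp}(t)$ decomposes into two-element orbits and has even size; but Lemma~\ref{L:supp} makes this size congruent mod $2$ to the odd number $\langle\pi e_t,e_t\rangle$, a contradiction. Finally, a fixed point of $\pi$ occupies the same cell in $t$ and in $\pi t$, so two fixed points in one column of $t$ would place two entries in a common column of both $t$ and $\pi t$, which is exactly what was just excluded; hence $\pi$ fixes at most one entry per column.

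I expect the delicate step to be the fixed-point-freeness of $(i,j)$ on $\op{supp}(t)$: everything hinges on the fact that two entries sharing a column of $t$ are forced into distinct rows of every supporting tabloid. This is precisely the point where the column geometry of the polytabloid generators (as opposed to their row structure) enters, and it is what converts the purely numerical parity identity of Lemma~\ref{L:supp} into the geometric statement about columns.
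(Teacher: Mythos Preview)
Your proof is correct and follows the same route as the paper: reduce via Lemma~\ref{L:supp} to $\pi\in R_t$, bound $m$ by counting transpositions in $\sym_\mu$, and for the column statement show that $(i,j)\in C_t\cap C_{\pi t}$ would give a fixed-point-free involution on $\op{supp}(\pi t)\cap\op{supp}(t)$. You supply a bit more detail than the paper---justifying the harmless replacement of $t$ by $\sigma t$, spelling out why $(i,j)$ has no fixed tabloids, and explicitly deducing the fixed-point statement from the stronger column statement---but the argument is the same.
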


We can now prove a key technical result:

\begin{Lemma}\label{L:lower}
Let $t$ be a $\mu$-tableau and let $m$ be a positive integer such that $\langle \pi e_t,e_t\rangle$ is odd, for some $m$-involution $\pi\in\sym_n$. Then $m\geq\frac{n-|\mu|_a}{2}$.
\begin{proof}
Let $T\in\op{supp}(\pi t)\cap\op{supp}(t)$ such that $\pi T=T$. Write $\pi_j$ for the restriction of $\pi$ to the rows $T_{2j-1}$ and $T_{2j}$ of $T$, for each $j>0$. Then there is $m_j\geq0$ such that $\pi_j$ is an $m_j$-involution, for each $j>0$. So $m=\sum m_j$ and $\pi=\pi_1\pi_2\dots\pi_{\lfloor\frac{\ell(\mu)+1}{2}\rfloor}$.

We assume for the sake of contradiction that $m<\frac{n-|\mu|_a}{2}$. Now $\frac{n-|\mu|_a}{2}=\sum_{j>0}\mu_{2j}$. So $m_j<\mu_{2j}$ for some $j>0$, and we choose $j$ to be the smallest such positive integer.

There is a unique $\sigma\in C_t$ such that $T=\{\sigma t\}$. Set $s=\sigma t$. So $\pi\in R_s$.  We define the graph $\op{Gr}_\pi(s)$ of $\pi$ on $s$ as follows. The vertices of $\op{Gr}_\pi(s)$ are labels $1,\dots,\mu_{2j-1}$ of the columns which meet row $\mu_{2j-1}$ of $s$. There is an edge $c_1\longleftrightarrow c_2$ if and only if one of the two transpositions $(s(2j-1,c_1),s(2j-1,c_2))$ or $(s(2j,c_1),s(2j,c_2))$ belongs to $\pi_j$. As there are at most two entries in each column of $s$ which are moved by $\pi_j$, it follows that each connected component of $\op{Gr}_\pi(s)$ is either a line segment or a simple closed curve.

We claim that $\op{Gr}_\pi(s)$ has a component with a vertex set contained in $\{1,\dots,\mu_{2j}\}$. For otherwise every component $\Gamma$ of $\op{Gr}_\pi(s)$ is a line segment and $|\op{Edge}(\Gamma)|\geq|\op{Vx}(\Gamma)\cap\{1,\dots,\mu_{2j}\}|$. Summing over all $\Gamma$ we get the contradiction
$$
\mu_{2j}=\sum_{\Gamma}|\op{Vx}(\Gamma)\cap\{1,\dots,\mu_{2j}\}|\leq\sum_{\Gamma}|\op{Edge}(\Gamma)|=m_j.
$$

Now let $X$ be the union of the component of $\op{Gr}_\pi(s)$ which are contained in $\{1,\dots,\mu_{2j}\}$ and let $\Gamma$ be the component of $\op{Gr}_\pi(s)$ which contains $\op{min}(X)$. In particular $\op{Vx}(\Gamma)\subseteq\{1,\dots,\mu_{2j}\}$.

Consider the involution $\sigma_\Gamma:=\prod_{c\in\op{Vx}(\Gamma)}(t(2j-1,c),t(2j,c))$. This transposes the entries between rows $2j-1$ and $2j$ in each column in $\op{Vx}(\Gamma)$. Now it is clear that $\pi$ is in the row stabilizer of $\sigma_\Gamma s$. So $\{\sigma_\Gamma s\}\in\op{supp}(\pi t)\cap\op{supp}(t)$. Moreover, $\op{Gr}_\pi(s)=\op{Gr}_\pi(\sigma_\Gamma s)$ and $s=\sigma_\Gamma(\sigma_\Gamma s)$. It follows that the pair $T\ne\sigma_\Gamma T$ of tabloids makes zero contribution to $\langle \pi e_t,e_t\rangle$ modulo $2$. But $T$ is an arbitrary $\pi$-fixed tabloid in $\op{supp}(\pi t)\cap\op{supp}(t)$. So $\langle\pi e_t,e_t\rangle$ is even, according to Lemma \ref{L:supp}. This contradiction completes the proof.
\end{proof}
\end{Lemma}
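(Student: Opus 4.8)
The plan is to combine Lemma~\ref{L:supp} with a combinatorial pairing argument on tabloids. First I would translate the hypothesis: by Lemma~\ref{L:supp} the oddness of $\langle\pi e_t,e_t\rangle$ says that the set $F$ of $\pi$-fixed tabloids in $\op{supp}(\pi t)\cap\op{supp}(t)$ has odd cardinality, and (as in Lemma~\ref{L:upper}, using $e_{\sigma t}=\pm e_t$) I may replace $t$ by a translate so that $\pi\in R_t\cong\sym_\mu$. Then $\pi$ decomposes rowwise, and grouping the rows in consecutive pairs $(2j-1,2j)$ I write $m=\sum_{j>0}m_j$, where $m_j$ is the number of transpositions of $\pi$ supported on rows $2j-1$ and $2j$. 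Since $\frac{n-|\mu|_a}{2}=\sum_{j>0}\mu_{2j}$, the desired bound is exactly $\sum m_j\geq\sum\mu_{2j}$, so it suffices to reach a contradiction from the assumption that $m_j<\mu_{2j}$ for some $j$; I would fix the least such $j$.

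The core idea is to manufacture a fixed-point-free involution on $F$, which would force $|F|$ to be even and contradict Lemma~\ref{L:supp}. Given $T\in F$, write $T=\{s\}$ with $s=\sigma t$ for the unique $\sigma\in C_t$, so that $\pi\in R_s$. On rows $2j-1,2j$ of $s$ I would record how $\pi$ links columns: form a graph whose vertices are the columns $1,\dots,\mu_{2j-1}$ meeting row $2j-1$, joining $c_1,c_2$ whenever $\pi$ transposes the two entries they contain in one of these rows. Because $\pi$ moves at most two entries per column (at most one per row), this graph has maximum degree $2$, hence is a disjoint union of paths and cycles, with total edge count $m_j$.

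The crux is a counting argument showing that, because $m_j<\mu_{2j}$, some connected component $\Gamma$ has all of its vertices among the first $\mu_{2j}$ columns, i.e.\ the columns that reach into the shorter row $2j$. Indeed, any component meeting a column beyond $\mu_{2j}$ contributes at least as many edges as it has vertices in $\{1,\dots,\mu_{2j}\}$; were every component of this kind, summing would give $\mu_{2j}\leq m_j$, a contradiction. Having located such a $\Gamma$ canonically, say the component containing the smallest column occurring in any such component, I would let $\sigma_\Gamma$ be the product of the transpositions $(s(2j-1,c),s(2j,c))$ over $c\in\op{Vx}(\Gamma)$. Since $\op{Vx}(\Gamma)\subseteq\{1,\dots,\mu_{2j}\}$, each such column genuinely meets row $2j$, so $\sigma_\Gamma\in C_t$ and $\{\sigma_\Gamma s\}$ is a $\pi$-fixed tabloid in the support intersection, distinct from $T$; the assignment $T\mapsto\{\sigma_\Gamma s\}$ is the candidate involution.

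The hard part will be verifying that this pairing is genuinely well defined and fixed-point-free. I must check that the chosen component $\Gamma$, and hence $\sigma_\Gamma$, depends only on the tabloid $T$ and not on the representative $s$; that $\pi$ still lies in the row stabilizer of $\sigma_\Gamma s$, so the image remains in $F$; that $\sigma_\Gamma$ does not disturb the rowwise transposition counts $m_i$, so the least bad index $j$ is unchanged; and that rerunning the construction on $\{\sigma_\Gamma s\}$ recovers the same $\Gamma$ and returns $T$. The canonical choice of $\Gamma$ by its least column, together with the observation that the swap $\sigma_\Gamma$ leaves the linking graph unchanged, is what makes the map an involution without fixed points. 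Once these checks are in place, $|F|$ is even, contradicting Lemma~\ref{L:supp} and completing the proof.
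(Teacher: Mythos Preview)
Your proposal is correct and follows essentially the same route as the paper: reduce via Lemma~\ref{L:supp} to the $\pi$-fixed tabloids, decompose $\pi$ along consecutive row pairs, pick the least $j$ with $m_j<\mu_{2j}$, build the degree-$\leq2$ column graph on rows $2j-1,2j$, use the edge count to locate a component $\Gamma$ entirely inside $\{1,\dots,\mu_{2j}\}$, and pair $T$ with $\{\sigma_\Gamma s\}$ via the canonical choice of $\Gamma$. You are in fact more explicit than the paper about the well-definedness checks (that $j$, the $m_i$, and the graph are unchanged under $\sigma_\Gamma$, so the map is a fixed-point-free involution); just be careful that $m_j$ and the least bad index $j$ must be read off each tabloid $T$ separately rather than fixed once from the initial $t$, since different $\sigma\in C_t$ can move transpositions of $\pi$ between row pairs.
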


\subsection{Proof of Theorem \ref{T:main}}

\begin{proof}[\unskip\nopunct]
\hspace{1em} Suppose first that $P^\mu$ has quadratic type. Then by part (vi) of Proposition \ref{P:strong+weak}, there is an involution $\hat\pi\in2.\alt_n$ such that $B(\hat\pi x,x\rangle$ is odd, for some $x\in D_A^{\mu}$. Let $\pi$ be the image of $\hat\pi$ in $\alt_n$. Then there exists a $\mu$-tableau $t$ such that $\langle\pi e_t,e_t\rangle$ is odd. Now $\pi$ is a $4m$-involution, for some $m>0$. Lemmas \ref{L:upper} and \ref{L:lower} imply that 
$\frac{n-|\mu|_a}{2}\leq 4m\leq\frac{n-\ell_o(\mu)}{2}$. This proves the `only if' part of the Theorem.

According to Lemma \ref{L:strongly_real}, the strongly real $2$-regular conjugacy classes of $2.\alt_n$ are enumerated by $\lambda\in{\mathcal O}(n)$ such that there is a positive integer $m$ with $\frac{n-\ell(\lambda)}{2}\leq 4m\leq\frac{n-m_o(\lambda)}{2}$ (if $\lambda$ has distinct parts, $\frac{n-\ell(\lambda)}{2}=\frac{n-m_o(\lambda)}{2}$ and there are two $2$-regular classes of $2.\alt_n$ labelled by $\lambda$. In all other cases there is a single $2$-regular class of $2.\alt_n$ labelled by $\lambda$.).

Now by Theorem 2.1 in \cite{CGJL} (or the main result in \cite{Murray18}) there exists a bijection $\phi:{\mathcal O}(n)\rightarrow{\mathcal D}(n)$ such that $\ell(\lambda)=|\phi(\lambda)|_a$ and $m_o(\lambda)=\ell_o(\phi(\lambda))$, for all $\lambda\in{\mathcal O}(n)$. Then from the previous paragraph the number of strongly real $2$-regular conjugacy classes of $2.\alt_n$ coincides with the number of irreducible $k(2.\alt_n)$-modules enumerated by $\mu\in{\mathcal D}(n)$ such that $\frac{n-|\mu|_a}{2}\leq 4m\leq\frac{n-\ell_o(\mu)}{2}$ for some integer $m$. However, from earlier in the proof, these are the only $P^\mu$ which can be of quadratic type. We conclude from Proposition \ref{P:GM} that each of these $P^\mu$ is in fact of quadratic type, and no other $P^\mu$ can be of quadratic type. 
\end{proof}

\subsection{Example with $2.\alt_{13}$}

We determine the quadratic type of all projective indecomposable $k(2.\alt_{13})$-modules. Here are the $18$ partitions in ${\mathcal D}(13)$:
$$\begin{array}{|l|l|l|l|}\hline
\mu&\frac{n-|\mu|_a}{2}&\frac{n-\ell_o(\mu)}{2}&\mbox{type}\\
\hline
(7,6)&6&6&\mbox{2 non-quadratic}\\
(8,5)&5&6&\mbox{non-quadratic}\\
(6,5,2)&5&6&\mbox{non-quadratic}\\
(6,4,2,1)&5&6&\mbox{non-quadratic}\\
(5,4,3,1)&5&5&\mbox{2 not self-dual}\\
(7,5,1)&5&5&\mbox{2 not self-dual}\\
(9,4)&4&6&\mbox{quadratic}\\
(7,4,2)&4&6&\mbox{quadratic}\\
(6,4,3)&4&6&\mbox{quadratic}\\
(8,4,1)&4&6&\mbox{quadratic}\\
(7,3,2,1)&4&5&\mbox{quadratic}\\
(10,3)&3&6&\mbox{quadratic}\\
(8,3,2)&3&6&\mbox{quadratic}\\
(9,3,1)&3&5&\mbox{quadratic}\\
(11,2)&2&6&\mbox{quadratic}\\
(10,2,1)&2&6&\mbox{quadratic}\\
(12,1)&1&6&\mbox{quadratic}\\
(13)&0&6&\mbox{quadratic}\\
\hline
\end{array}
$$
Using (i) and (ii) in Lemma \ref{L:Benson_Murray}, we see that $D^{\mu}{\downarrow_{\alt_{13}}}$ is a sum of two non-isomorphic irreducible $k(2.\alt_{13})$-modules for $\mu=(7,6),(5,4,3,1)$ or $(7,5,1)$. For all other $\mu$, $D^{\mu}{\downarrow_{\alt_{13}}}$ is irreducible. So there are $21=18+3$ projective indecomposable $k(2.\alt_{13})$-modules.

By the last statement in Lemma \ref{L:Benson_Murray}, the two irreducible $k(2.\alt_{13})$-modules $D_A^{(5,4,3,1)}$ are duals of each other, as are the two irreducible $k(2.\alt_{13})$-modules $D_A^{(7,5,1)}$. By the same result both irreducible $k(2.\alt_{13})$-modules $D_A^{(7,6)}$ are self-dual. However $6\equiv2\Mod4$. So neither principal indecomposable $k(2.\alt_{13})$-module $P^{(7,6)}$ is of quadratic type.

Next if $\mu=(8,5),(6,5,2)$ or $(6,4,2,1)$ we have $\frac{n-|\mu|_a}{2}=5$ and $\frac{n-\ell_o(\mu)}{2}=6$. So the principal indecomposable $k(2.\alt_{13})$-module $P^\mu$ is not of quadratic type for any of these $\mu$'s. For each of the remaining partitions $\mu$, the principal indecomposable $k(2.\alt_{13})$-module $P^\mu$ is of quadratic type, according to Theorem \ref{T:main}.


\begin{thebibliography}{} 

\bibitem[B]{Benson} D.~Benson, {Spin modules for symmetric groups,} {\em J. London Math. Soc.} (2) {\bf38} (1988) 250--262.


\bibitem[CGJL]{CGJL} {W.Y.C.~Chen},\hspace{.2cm}{H.Y.~Gao},\hspace{.2cm}{K.Q.~Ji},\hspace{.2cm}{M.Y.X.~Li}, {A unification of two refinements of Euler's partition theorem,} {\em Ramanujan J.} (1--3) {\bf23} (2010) 137--149.

\bibitem[GM]{GowM} R.~Gow, J.~Murray, {Strongly Real $2$-regular classes and Principal Indecomposable Modules of Quadratic Type}, preprint 2017.

\bibitem[BH]{BH} B.~Huppert, N.~Blackburn, {\em Finite Groups II,} Grundlehren der math. Wissen. {\bf 242}, Springer-Verlag, 1982.

\bibitem[J]{James} G.~D.~James, {The Representation Theory of the Symmetric Groups}, Lecture Notes in Mathematics 628, Springer-Verlag, 2006.

\bibitem[M17]{Murray17} J.~C.~Murray, {The duals of the $2$-modular irreducible modules of the alternating groups}, to apper in SIGMA, special issue on the Representation Theory of the Symmetric Groups and Related Topics. 

\bibitem[M18]{Murray18} J.~Murray, {A new bijection for Euler's partition theorem}, 14 February 2018, 12pp.


\end{thebibliography}
\end{document}